\documentclass{amsart}
\usepackage{amsthm,amssymb,amsmath,dsfont,bigdelim}
\usepackage{graphicx}
\usepackage{float}
\usepackage[arc,all]{xy}
\usepackage{longtable}
\vfuzz2pt 
\hfuzz2pt 
\newtheorem{thm}{Theorem}[section]
\newtheorem{cor}[thm]{Corollary}
\newtheorem{lem}[thm]{Lemma}
\newtheorem{prop}[thm]{Proposition}
\theoremstyle{definition}

\newtheorem*{theorem}{\textbf{Main Theorem}}
\newtheorem*{proof M}{\textbf{Proof of  Main Theorem}}
\theoremstyle{remark}

\numberwithin{equation}{section}

\begin{document}
\title[ Schur'theorem]{Characterizing nilpotent Lie algebras that satisfy on converse of the Schur's theorem}%

\author{afsaneh shamsaki}%
\address{School of Mathematics and Computer Science\\
Damghan University, Damghan, Iran}
\email{Shamsaki.afsaneh@yahoo.com}%
\author[P. Niroomand]{Peyman Niroomand}\thanks{Corresponding Author: Peyman Niroomand}
\email{niroomand@du.ac.ir, p$\_$niroomand@yahoo.com}
\address{School of Mathematics and Computer Science\\
Damghan University, Damghan, Iran}
\keywords{Schur's theorem,  Nilpotent Lie algebra, }%
\subjclass{17B30, 17B05, 17B99}

\begin{abstract}
Let $ L $ be a finite dimensional nilpotent Lie algebra and $ d $ be the minimal number generators for $ L/Z(L). $ It is known that $ \dim L/Z(L)=d \dim L^{2}-t(L)$ for an integer $ t(L)\geq 0. $ 
In this paper, we classify all finite dimensional nilpotent Lie algebras $ L $ when $ t(L)\in \lbrace 0, 1, 2 \rbrace.$ We find also  a construction, which shows that there exist Lie algebras of arbitrary $ t(L). $ 
\end{abstract}
\maketitle
 \section{Introduction}

There are  some known  results concerning the relation between the center   and the derived subgroup of a group $ G. $ In 1904, Schur proved that if $ G $ is a group such that the order of $ G/Z(G) $ is finite, then the order of $ G' $ is finite. Although,  converse of the  Schur's theorem 
was not true in general, but the second author in \cite{sN} showed that $ \mid G/Z(G) \mid \leq \mid G'\mid^{d(G/Z(G))}.  $ This shows that $G/ Z(G)$ is finite if 
$G'$ is finite and $G/Z(G)$ is finitely generated. Hence
by putting some extra conditions on the group  converse of the Schur's theorem is true.
 \\
By some results due to  Lazard there are in \cite{Ber} analogies  between groups and Lie algebras, but  they are not perfect most of  times and  careful distinctions shall be done.  
Let $ L $ be a Lie algebra and   $ L^{2}, $  $ Z(L) $ and $ Z_2(L) $ are   derived subalgebra, the center and the second center of $ L. $ 
   Similar to the group theory, Moneyhun in \cite{M} has showed if $ \dim L/Z(L)=n, $ then $ \dim L^{2}\leq \frac{1}{2}n(n-1). $ 
  A natural question that arises is converse of the Schur's theorem, i.e., whether the finiteness of dimension of $ L^{2} $ implies the finiteness of dimension $ L/Z(L). $
 In \cite{A}, a similar  result of \cite{sN} for  groups shows that for a Lie algebra $ L $ we have $ \dim L/Z(L)=d \dim L^{2}-t(L)$ for an integer $ t(L)\geq 0 $ where $ d $ is  the minimal  number of generators of $ L/Z(L). $ 
 This shows that converse of the Schur's theorem is true when $d$ and $\dim L^2$ are finite.
 In this paper, we determine the structure of all  nilpotent Lie algebras $ L $  up to isomorphism for $ t(L)\in \lbrace 0, 1, 2 \rbrace. $\\
We recall that a Lie algebra $ L $ is Heisenberg provided that $ \dim L^{2}=\dim Z(L) $ and $ \dim L^{2}=1. $ Such Lie algebras are odd dimension with the following structure
\begin{equation*}
\langle x_1,y_1, \dots, x_{n-1},y_{n-1},x_n,y_n, z \mid [x_i, y_i]=z \rangle.
\end{equation*}
Here we denote an abelian Lie algebra of dimension n and the Heisenberg Lie algebra
of dimension $2m + 1$ by $A(n)$ and $H(m)$, respectively.  Also, we use the following structures from \cite{G} for all nilpotent Lie algebras 
$ L $  of dimension at most $ 6 $ with $ \dim L^{2}\geq 2$ over an arbitrary field $ \mathbb{F}. $
\[L_{4,3} = \langle x_1, x_2, x_3, x_4 \ | \ [x_1,x_2]=x_3, [x_1,x_3]=x_4 \rangle, \]
\[L_{5,3} = L_{4,3} \oplus A(1), \]
 \[L_{5,5} = \langle x_1, x_2, x_3, x_4, x_5 \ | \ [x_1,x_2]=x_3, [x_1,x_3]=[x_2,x_4]=x_5 \rangle, \]
\[L_{5,6} =  \langle x_1, x_2, x_3, x_4, x_5 \ | \ [x_1,x_2]=x_3, [x_1,x_3]=x_4, [x_1,x_4]=[x_2,,x_3]=x_5  \rangle,\]
\[L_{5,7} =  \langle x_1, x_2, x_3, x_4, x_5 \ | \ [x_1,x_2]=x_3, [x_1,x_3]=x_4, [x_1,x_4]=x_5 \rangle,\]
\[L_{5,8} =  \langle x_1, x_2, x_3, x_4, x_5 \ | \ [x_1,x_2]=x_4, [x_1,x_3]=x_5 \rangle,\]
\[L_{5,9} =  \langle x_1, x_2, x_3, x_4, x_5 \ | \ [x_1,x_2]=x_3, [x_1,x_3]=x_4, [x_2,x_3]=x_5 \rangle.\]
\[L_{6,k}=L_{5,k} \oplus A(1) ~~\text{for}~~ \ k=3, 5, 6, 7,8, 9,\]
\[ L_{6,10}= \langle x_{1}, \cdots, x_{6}  \ | \   [x_{1},x_{2}]=x_{3},[x_{1},x_{3}]=x_{6}, [x_{4},x_{5}]=x_{6}\rangle, \]
  \[ L_{6,11}= \langle x_{1}, \cdots, x_{6}  \ | \   [x_{1},x_{2}]=x_{3},[x_{1},x_{3}]=x_{4}, [x_{1},x_{4}]=x_{6}, [x_{2},x_{3}]=x_{6},[x_{2},x_{5}]=x_{6} \rangle, \]
 \[ L_{6,12}= \langle x_{1}, \cdots, x_{6}  \ | \   [x_{1},x_{2}]=x_{3},[x_{1},x_{3}]=x_{4},[x_{1},x_{4}]=x_{6}, [x_{2},x_{5}]=x_{6} \rangle, \]
   \[ L_{6,13}= \langle x_{1}, \cdots, x_{6}  \ | \   [x_{1},x_{2}]=x_{3},[x_{1},x_{3}]=x_{5}, [x_{2},x_{4}]=x_{5}, [x_{1},x_{5}]=x_{6},[x_{3},x_{4}]=x_{6} \rangle, \] 
   \[ L_{6,14}= \langle x_{1}, \cdots, x_{6}  \ | \   [x_{1},x_{2}]=x_{3},[x_{1},x_{3}]=x_{4},  [x_{1},x_{4}]=x_{5}, [x_{2},x_{3}]=x_{5},\]
   \[[x_{2},x_{5}]=x_{6},[x_{3},x_{4}]=-x_{6} \rangle, \]
   \[ L_{6,15}= \langle x_{1}, \cdots, x_{6}  \ | \   [x_{1},x_{2}]=x_{3},[x_{1},x_{3}]=x_{4}, [x_{1},x_{4}]=x_{5}, [x_{2},x_{3}]=x_{5},
   \]
   \[[x_{1},x_{5}]=x_{6},[x_{2},x_{4}]=x_{6}\rangle, \]
   \[  L_{6,16}= \langle x_{1}, \cdots, x_{6}  \ | \   [x_{1},x_{2}]=x_{3},[x_{1},x_{3}]=x_{4}, [x_{1},x_{4}]=x_{5}, [x_{2},x_{5}]=x_{6},[x_{3},x_{4}]=-x_{6} \rangle,\]
  \[ L_{6,17}= \langle x_{1}, \cdots, x_{6}  \ | \  [x_{1},x_{2}]=x_{3},[x_{1},x_{3}]=x_{4}, [x_{1},x_{4}]=x_{5}, [x_{1},x_{5}]=x_{6},[x_{2},x_{3}]=x_{6}\rangle,\]
 \[ L_{6,18}= \langle x_{1}, \cdots, x_{6}  \ | \   [x_{1},x_{2}]=x_{3},[x_{1},x_{3}]=x_{4}, [x_{1}, x_{4}]=x_{5}, [x_{1},x_{5}]=x_{6} \rangle,\]
   \[ L_{6,19}(\varepsilon) =\langle x_{1}, \cdots, x_{6}  \ | \  [x_{1},x_{2}]=x_{4}, [x_{1},x_{3}]=x_{5}, [x_{1},x_{5}]=x_{6},[x_{2},x_{4}]=x_{6}, \]
  \[ [x_{3},x_{5}]=\varepsilon x_{6}\rangle,~~\text{for all} ~~\varepsilon\in  \mathbb{F}^{*}/(\stackrel{*}{\sim})\] 
    \[ L_{6,20}= \langle x_{1}, \cdots, x_{6}  \ | \   [x_{1},x_{2}]=x_{4},[x_{1},x_{3}]=x_{5}, [x_{1},x_{5}]=x_{6}, [x_{2},x_{4}]=x_{6} \rangle ,\]
  \[ L_{6,21}(\varepsilon)= \langle x_{1}, \cdots, x_{6}  \ | \   [x_{1},x_{2}]=x_{3},[x_{1},x_{3}]=x_{4}, [x_{2},x_{3}]=x_{5}, [x_{1},x_{4}]=x_{6},\]
 \[ [x_{2},x_{5}]=\varepsilon x_{6} \rangle, ~~\text{for all} ~~\varepsilon\in  \mathbb{F}^{*}/(\stackrel{*}{\sim})\]
   \[ L_{6,22}(\varepsilon) =\langle x_{1}, \cdots, x_{6}  \ | \   [x_{1},x_{2}]=x_{5},[x_{1},x_{3}]=x_{6}, [x_{2},x_{4}]=\varepsilon x_{6},\]
   \[ [x_{3},x_{4}]=x_{5} \rangle, ~~\text{for all} ~~\varepsilon\in  \mathbb{F}/(\stackrel{*}{\sim})\]
 \[ L_{6,23}= \langle x_{1}, \cdots, x_{6}  \ | \  [x_{1},x_{2}]=x_{3},[x_{1},x_{3}]=x_{5}, [x_{1},x_{4}]=x_{6},[x_{2},x_{4}]=x_{5} \rangle,\]
\[ L_{6,24}(\varepsilon)= \langle x_{1}, \cdots, x_{6}  \ | \   [x_{1},x_{2}]=x_{3},[x_{1},x_{3}]=x_{5}, [x_{1},x_{4}]=\varepsilon x_{6}, [x_{2},x_{3}]=x_{6},\]
\[[x_{2},x_{4}]=x_{5} \rangle, ~~\text{for all} ~~\varepsilon\in  \mathbb{F}/(\stackrel{*}{\sim})\]
 \[L_{6,25}=\langle x_{1}, \cdots, x_{6}  \ | \   [x_{1},x_{2}]=x_{3},[x_{1},x_{3}]=x_{5}, [x_{1},x_{4}]=x_{6} \rangle, \]
   \[ L_{6,26}=\langle x_{1}, \cdots, x_{6}  \ | \   [x_{1},x_{2 }]=x_{4},[x_{1},x_{3}]=x_{5}, [x_{2},x_{3}]=x_{6} \rangle,\]
  \[L_{6,27}= \langle x_{1}, \cdots, x_{6} \ | \  [x_{1},x_{2}]=x_{3},[x_{1},x_{3}]=x_{5}, [x_{2},x_{4}]=x_{6} \rangle,\]
 \[ L_{6,28}= \langle x_{1}, \cdots, x_{6} \ | \  [x_{1},x_{2}]=x_{3},[x_{1},x_{3}]=x_{4}, [x_{1},x_{4}]=x_{5} , [x_{2},x_{3}]=x_{6} \rangle.\]
 \[ L^{(2)}_{6,1}=\langle  x_{1}, \cdots, x_{6} \ | \ [x_{1},x_{2}]=x_{3},[x_{1},x_{3}]=x_{5}, [x_{1},x_{5}]=x_{6} , [x_{2},x_{4}]=x_{5}+x_{6}, [x_3, x_4]=x_6 \rangle,
 \]
   \[ L^{(2)}_{6,2}=\langle  x_{1}, \cdots, x_{6} \ | \ [x_{1},x_{2}]=x_{3},[x_{1},x_{3}]=x_{4}, [x_{1},x_{4}]=x_{5} , [x_{1},x_{5}]=x_{6}, [x_2, x_3]=x_5+x_6,\]
\[  [x_2, x_4]=x_6 \rangle,
 \]
 \[ L^{(2)}_{6,3}(\varepsilon)=\langle  x_{1}, \cdots, x_{6} \ | \ [x_{1},x_{2}]=x_{3},[x_{1},x_{3}]=x_{4}, [x_{1},x_{4}]=x_{5} ,  [x_2, x_3]=x_5+\varepsilon x_6,\]
\[  [x_{2},x_{5}]=x_{6}, [x_2, x_4]=x_6 \rangle ~~ \text{where}~~ \varepsilon \in \mathbb{F}^{*}/(\stackrel{*+}{\sim})
 \]
 \[ L^{(2)}_{6,4}(\varepsilon)=\langle  x_{1}, \cdots, x_{6} \ | \ [x_{1},x_{2}]=x_{3},[x_{1},x_{3}]=x_{4}, [x_{1},x_{4}]=x_{5} ,  [x_2, x_3]=\varepsilon x_6,\]
\[  [x_{2},x_{5}]=x_{6}, [x_3, x_4]=x_6 \rangle ~~ \text{where}~~ \varepsilon \in \mathbb{F}^{*}/(\stackrel{*+}{\sim})
 \]
 \[ L^{(2)}_{6,5}=\langle  x_{1}, \cdots, x_{6} \ | \ [x_{1},x_{2}]=x_{4},[x_{1},x_{3}]=x_{5}, [x_{2},x_{5}]=x_{6} ,  [x_3, x_4]=x_6 \rangle, \]
\[ L^{(2)}_{6,6}=\langle  x_{1}, \cdots, x_{6} \ | \ [x_{1},x_{2}]=x_{3},[x_{1},x_{3}]=x_{4}, [x_{1},x_{5}]=x_{6} ,  [x_2, x_3]=x_5, [x_2, x_4]=x_6 \rangle,\]
\[ L^{(2)}_{6,7}(\eta)=\langle  x_{1}, \cdots, x_{6} \ | \ [x_{1},x_{2}]=x_{5},[x_{1},x_{3}]=x_{6}, [x_{2},x_{4}]=\eta x_{6} ,  [x_3, x_4]=x_5+x_6 \rangle \]
\[ \text{where}~~  \eta \in \lbrace 0, \omega \rbrace,\]
\[ L^{(2)}_{6,8}(\eta)=\langle  x_{1}, \cdots, x_{6} \ | \ [x_{1},x_{2}]=x_{3},[x_{1},x_{3}]=x_{5}, [x_{1},x_{4}]=\eta x_{6} ,  [x_2, x_3]=x_6, [x_2, x_4]=x_5+x_6 \rangle \]
\[ \text{where}~~  \eta \in \lbrace 0, \omega \rbrace,\]
The following lemma and theorem are useful in the next section.
\begin{lem}\cite[Lemma 3.3]{N1}\label{l0}
Let $ L $ be an $n $-dimensional Lie algebra and $ \dim L^{2}=1. $ Then for some $ m\geq 1 $
\begin{equation*}
L\cong H(m)\oplus A(n-2m-1).
\end{equation*}
\end{lem}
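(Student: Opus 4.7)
The plan is to reduce the structure of $L$ to a single alternating bilinear form on $L/Z(L)$ and then apply the standard symplectic basis construction. Write $L^{2}=\langle z\rangle$ for some nonzero $z$. The first step is to observe that $z\in Z(L)$: since $[x,z]\in L^{2}=\langle z\rangle$ for every $x\in L$, the restriction of $\mathrm{ad}(x)$ to $\langle z\rangle$ is multiplication by some scalar $\lambda(x)$, and the nilpotency of $\mathrm{ad}(x)$ (implicit from the nilpotent setting of the paper) forces $\lambda(x)=0$. Hence $z$ is central.

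Next, define $B:L/Z(L)\times L/Z(L)\to \mathbb{F}$ by requiring $[x,y]=B(\bar x,\bar y)\,z$. Well-definedness on cosets is immediate from $z\in Z(L)$, and the form is clearly bilinear and alternating. Non-degeneracy also follows at once: if $B(\bar x,\cdot)\equiv 0$ then $[x,L]=0$, so $x\in Z(L)$, i.e.\ $\bar x=0$. Consequently $L/Z(L)$ carries a non-degenerate alternating form, which forces $\dim L/Z(L)=2m$ for some integer $m\geq 1$ (with $m\geq 1$ since $L^{2}\neq 0$).

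Finally, pick a symplectic basis $\bar x_{1},\bar y_{1},\ldots,\bar x_{m},\bar y_{m}$ of $L/Z(L)$ satisfying $B(\bar x_{i},\bar y_{j})=\delta_{ij}$ and $B(\bar x_{i},\bar x_{j})=B(\bar y_{i},\bar y_{j})=0$. Any lifts $x_{i},y_{i}\in L$ then automatically satisfy $[x_{i},y_{j}]=\delta_{ij}z$ and $[x_{i},x_{j}]=[y_{i},y_{j}]=0$, so the subalgebra $H=\langle x_{1},y_{1},\ldots,x_{m},y_{m},z\rangle$ is a copy of $H(m)$. Extending $\{z\}$ to a basis $\{z,w_{1},\ldots,w_{n-2m-1}\}$ of $Z(L)$, the span $A=\langle w_{1},\ldots,w_{n-2m-1}\rangle$ is an abelian ideal contained in $Z(L)$, so it commutes with all of $L$. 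A dimension count $2m+1+(n-2m-1)=n$ together with $H\cap A=0$ then gives $L\cong H(m)\oplus A(n-2m-1)$.

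The only real subtlety is the first step: centrality of $z$ is what makes the form $B$ well-defined and converts the commutator data into a purely symplectic-linear problem. Without the nilpotency assumption the conclusion fails, as the two-dimensional non-abelian Lie algebra $\langle x,y\mid [x,y]=y\rangle$ has $\dim L^{2}=1$ yet is not of the form $H(m)\oplus A(k)$; so the nilpotency hypothesis is doing genuine work at that single point, and the remainder of the argument is linear algebra.
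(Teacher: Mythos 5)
Your proof is correct and complete. Note, though, that the paper does not prove this statement at all: it is quoted verbatim from \cite[Lemma 3.3]{N1} and used as a black box, so there is no internal argument to compare against. Your symplectic-form argument (centrality of the generator $z$ of $L^{2}$, the induced non-degenerate alternating form on $L/Z(L)$, a symplectic basis, and the splitting $L=H\oplus A$ with $A$ a complement of $\langle z\rangle$ in $Z(L)$) is the standard proof and is essentially how the result is established in the cited source. One genuinely useful observation you make: as printed here the lemma omits the word ``nilpotent,'' and your counterexample $\langle x,y\mid [x,y]=y\rangle$ shows the hypothesis is not cosmetic --- it is exactly what forces $L^{2}\subseteq Z(L)$ and makes the form $B$ well defined. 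All the remaining steps (non-degeneracy of $B$, evenness of $\dim L/Z(L)$, the check that $H\cap A=0$ and $[H,A]=0$) are carried out correctly.
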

\begin{prop}\cite[Proposition 2.1]{J}\label{prop0}
Let $ L $ be a $ d $-generator nilpotent Lie algebra of dimension $ n $ with the derived subalgebra of dimension $ m. $ Then 
\begin{equation*}
d=\dim L/L^{2}=n-m. 
\end{equation*}
\end{prop}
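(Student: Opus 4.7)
The statement bundles two assertions: the purely dimensional identity $\dim L/L^{2}=n-m$ and the algebraic identity $d=\dim L/L^{2}$. The first is immediate from the short exact sequence $0\to L^{2}\to L\to L/L^{2}\to 0$ and requires no hypothesis beyond finite-dimensionality. All of the work lies in the second identity, which I plan to prove through the Frattini subalgebra $\Phi(L)$, defined as the intersection of all maximal subalgebras of $L$.

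The easy direction $d\geq\dim L/L^{2}$ is clear: any $d$ generators of $L$ project to a spanning set of the quotient $L/L^{2}$. For the reverse inequality, I would invoke the standard Frattini fact that any subset $S\subseteq L$ whose image spans $L/\Phi(L)$ already generates $L$; for otherwise $\langle S\rangle$ would sit inside some maximal subalgebra $H$, giving $\langle S\rangle+\Phi(L)\subseteq H\neq L$ and contradicting $\langle S\rangle+\Phi(L)=L$. Granted this, to finish I only need the inclusion $L^{2}\subseteq\Phi(L)$: lifting any basis of $L/L^{2}$ to elements $x_{1},\dots,x_{k}\in L$ then produces a generating set of size $k=\dim L/L^{2}$, which gives $d\leq\dim L/L^{2}$.

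The step $L^{2}\subseteq\Phi(L)$ is where nilpotency enters, and it is the real obstacle. The strategy is to show that every maximal subalgebra $H$ of a nilpotent Lie algebra $L$ is an ideal. This uses the normalizer condition for nilpotent Lie algebras: if $H\subsetneq L$ then $N_{L}(H)\supsetneq H$, proved by a short induction along the lower central series of $L$. By maximality of $H$ we conclude $N_{L}(H)=L$, so $H$ is an ideal; then $L/H$ is one-dimensional abelian and $L^{2}\subseteq H$. Intersecting over all maximal $H$ yields $L^{2}\subseteq\Phi(L)$ and completes the argument. Everything outside of this normalizer step is formal bookkeeping; the only place genuine Lie-theoretic input is needed is here.
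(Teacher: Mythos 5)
Your argument is correct and complete in all essentials. Note that the paper itself gives no proof of this proposition at all: it is quoted verbatim from \cite[Proposition 2.1]{J} and used as a black box, so there is nothing internal to compare against. What you have written is the standard Lie-algebra analogue of the Burnside basis theorem: the identity $\dim L/L^{2}=n-m$ is pure linear algebra, the inequality $d\geq\dim L/L^{2}$ follows since the images of any generating set span the abelian quotient $L/L^{2}$, and the reverse inequality rests on $L^{2}\subseteq\Phi(L)$, which you correctly reduce via the non-generator property of $\Phi(L)$ to the fact that every maximal subalgebra of a nilpotent Lie algebra is an ideal of codimension one --- and your normalizer-condition induction along the lower central series (take $i$ maximal with $L^{i}\not\subseteq H$, so that $[L^{i},H]\subseteq L^{i+1}\subseteq H$) is exactly the right way to get that. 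You correctly identify nilpotency as the only place where genuine Lie-theoretic input is needed; the statement fails for, say, $\mathfrak{sl}_{2}$, where $L=L^{2}$ but $d=2$. The only hypotheses worth making explicit are finite-dimensionality (so that every proper subalgebra lies in a maximal one) and $L\neq 0$, both harmless here.
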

In the following table for convenient of the reader we get $ \dim L/Z(L), $ $ d( L/Z(L) )$ and $ \dim L^{2} $ for all nilpotent Lie algebras $ L $ of dimension at most $ 6 $ with $ \dim L^{2}\geq 2. $
\begin{longtable}{ccccc}
\multicolumn{3}{c}{\textbf{Table 1. } }\\
\hline \multicolumn{1}{c}{\textbf{Name}} & \multicolumn{1}{c}{$ \dim L/Z(L) $}   & \multicolumn{1}{c}{$ d( L/Z(L) )$} & \multicolumn{1}{c}{$ \dim L^{2} $} \\
\hline
\endhead
\hline \multicolumn{2}{r}{\small \itshape continued on the next page}
\endfoot
\endlastfoot
$L_{4,3},$ $L_{5,3}$ $ L_{6,3} $ & 3& 2 & 2\\
\\
$L_{5,5},$ $L_{6,5}$& 4& 3 & 2\\
\\
$L_{5,6},$ $L_{5,7},$ $L_{6,6},$ $L_{6,7}$& 4& 2 & 3\\
\\
$L_{5,8},$ $L_{6,8}$ & 3& 3 & 2\\
\\
$L_{5,9},$ $L_{6,9}$   & 3& 2 & 3 \\
\\
$L_{6,10}$& 5& 4 & 2 \\ 
\\
$L_{6,11},$ $L_{6,12},$ $L_{6,13},$ $L_{6,19}(\varepsilon),$ $L_{6,20}$& 5& 3 & 3 \\ 
\\
$L^{(2)}_{6,1},$ $L^{(2)}_{6,5}$ & 5& 3 & 3 \\ 
\\
$L_{6,14},$ $L_{6,15},$ $L_{6,16},$ $L_{6,17},$ $L_{6,18},$ $L_{6,21}(\varepsilon)$& 5& 2 & 4 \\
\\
$L^{(2)}_{6,2},$ $L^{(2)}_{6,3},$ $L^{(2)}_{6,4},$ $L^{(2)}_{6,6}$ & 5& 2 & 4 \\
\\
$L_{6,22}(\varepsilon),$ $L^{(2)}_{6,7}(\eta)$& 4& 4 & 2 \\ 
\\
$L_{6,23},$ $L_{6,24}(\varepsilon),$ $L_{6,25}$  & 4& 3 & 3 \\ 
\\
$L_{6,27},$ $L^{(2)}_{6,8}(\eta)$ & 4& 3 & 3 \\
\\
$L_{6,26}$ & 3& 3 & 3 \\
\\ 
$L_{6,28}$ & 4& 2 & 4\\
\\ 
\hline
 \label{ta1}
\end{longtable}
Finally, we recall the concept
a capable Lie algebra 
that be used in the next section. 
A Lie algebra $ L $ is called capable if $ L\cong H/Z(H) $ for a Lie algebra $ H $. 
\section{Main Results}
 In this section, we obtain the structure of all nilpotent Lie algebras $ L $ up to isomorphism when $ t(L)\in \lbrace 0, 1, 2 \rbrace. $
 The following  two lemmas show that it is sufficient to obtain 
the structure of all  nilpotent stem Lie algebras $ T $ such that $ \dim T/Z(T)=d(T/Z(T)) \dim T^{2}-t(L)$ for all $ t(L)\in \lbrace 0, 1, 2 \rbrace. $ 
\begin{lem}\cite[Proposition 3.1]{J1}\label{L1}
 Let $L$ be a of finite dimensional Lie algebra. Then $L = T \oplus A$ and $Z(T) = L^{2} \cap Z(L),$ where $A$ is abelian and $T$ is non-abelian Lie algebra.
\end{lem}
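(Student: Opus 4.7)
The plan is to produce the desired direct-sum decomposition by first choosing an abelian complement of $L^{2} \cap Z(L)$ inside $Z(L)$, and then extending $L^{2}$ to a complementary subalgebra. As a first step I would fix a vector-space decomposition $Z(L) = (L^{2} \cap Z(L)) \oplus A$, which exists because $L$ is finite dimensional. Since $A \subseteq Z(L)$ it is automatically abelian as a Lie algebra, and by construction $A \cap L^{2} = A \cap (L^{2} \cap Z(L)) = 0$.

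Next, using $A \cap L^{2} = 0$, I would extend a basis of $L^{2}$ to a basis of a vector-space complement of $A$ in $L$; calling its span $T$, one obtains $L = T \oplus A$ as vector spaces with $L^{2} \subseteq T$. Then $T$ is a subalgebra because $[T,T] \subseteq L^{2} \subseteq T$, and since $[L,A] = 0$ and $[L,T] \subseteq L^{2} \subseteq T$, both $A$ and $T$ are ideals of $L$. Hence $L = T \oplus A$ as a Lie algebra direct sum. Moreover $T^{2} = [T,T] = [L,L] = L^{2}$, which is nonzero whenever $L$ is non-abelian, so $T$ is non-abelian in that case.

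It remains to verify $Z(T) = L^{2} \cap Z(L)$. The inclusion $L^{2} \cap Z(L) \subseteq Z(T)$ is immediate, since $L^{2} \cap Z(L) \subseteq L^{2} \subseteq T$ and these elements commute with all of $L$, hence with $T$. For the reverse inclusion, any $z \in Z(T)$ commutes with $T$ and automatically with the central summand $A$, so $z \in T \cap Z(L)$; writing $z = x + a$ with $x \in L^{2} \cap Z(L) \subseteq T$ and $a \in A$ gives $a = z - x \in T \cap A = 0$, so $z \in L^{2}\cap Z(L)$. The step I expect to require the most care is precisely this last identification $T \cap Z(L) = L^{2} \cap Z(L)$; once $A$ has been selected inside $Z(L)$ so that $A \cap L^{2} = 0$, everything else reduces to linear algebra together with the fact that $A$ is central in $L$.
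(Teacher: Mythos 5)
Your argument is correct and complete: choosing $A$ as a linear complement of $L^{2}\cap Z(L)$ inside $Z(L)$ forces $A\cap L^{2}=0$, the extension of $L^{2}$ to a complement $T$ of $A$ is automatically an ideal, and the final identification $Z(T)=T\cap Z(L)=L^{2}\cap Z(L)$ goes through exactly as you describe. The paper itself offers no proof to compare against --- it cites this as Proposition 3.1 of Johari, Parvizi and Niroomand --- and your construction is the standard one for producing the stem summand, so nothing further is needed beyond the caveat you already make that $T$ is non-abelian only when $L$ is.
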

\begin{lem}\label{l1}
Let $ L $ be an $ n $-dimensional nilpotent Lie algebra. Then   $ L=T\oplus A(k) $ where $ T $ is a stem Lie algebra and $ A(k) $ is an abelian Lie algebra  for all $ k\geq 0 $ and  $ \dim L/Z(L)=d(L/Z(L)) \dim L^{2}-t(L) $ for all $ t(L)\geq 0 $ if and only if $ \dim T/Z(T)=d(T/Z(T)) \dim T^{2}-t(L)$ for all $ t(L)\geq 0. $ 
\end{lem}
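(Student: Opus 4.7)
The plan is to reduce the biconditional to the observation that $L$ and its stem factor $T$ share the three invariants $\dim L/Z(L)$, $d(L/Z(L))$, and $\dim L^{2}$. Once that is established, the two identities in the lemma are literally the same equation, and the equivalence becomes automatic.

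First I would invoke Lemma \ref{L1} to write $L = T \oplus A$ with $A$ abelian and $Z(T) = L^{2} \cap Z(L)$. Setting $A = A(k)$ gives the required direct-sum decomposition. Moreover, since $Z(T) = L^{2}\cap Z(L) \subseteq L^{2}$, and we will see $L^{2}=T^{2}$, the algebra $T$ satisfies $Z(T)\subseteq T^{2}$, i.e.\ $T$ is a stem Lie algebra, as claimed.

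Next I would compute the three invariants of $L$ from those of $T$. Because $A$ is abelian and central in $L$, a direct bracket calculation gives $L^{2}=[T\oplus A,T\oplus A]=[T,T]=T^{2}$. For the center, the inclusion $Z(T)\oplus A\subseteq Z(L)$ is immediate, while for the reverse inclusion any $z=t+a\in Z(L)$ satisfies $[t,t']=[z,t']=0$ for all $t'\in T$, forcing $t\in Z(T)$; hence $Z(L)=Z(T)\oplus A$. Consequently $L/Z(L)\cong T/Z(T)$, which yields both $\dim L/Z(L)=\dim T/Z(T)$ and $d(L/Z(L))=d(T/Z(T))$.

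Substituting these three equalities into $\dim L/Z(L)=d(L/Z(L))\dim L^{2}-t(L)$ turns it verbatim into $\dim T/Z(T)=d(T/Z(T))\dim T^{2}-t(L)$, so either identity holds exactly when the other does, for every integer $t(L)\geq 0$. No serious obstacle is expected; the only point that warrants care is the verification that the center splits as $Z(L)=Z(T)\oplus A$, which is a short calculation but is essential because the whole reduction rests on it.
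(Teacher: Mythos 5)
Your proposal is correct and follows essentially the same route as the paper: decompose $L=T\oplus A(k)$ via Lemma \ref{L1}, observe that $\dim L/Z(L)$, $d(L/Z(L))$ and $\dim L^{2}$ coincide with the corresponding invariants of $T$, and conclude that the two identities are the same equation. The only cosmetic difference is that you obtain $d(L/Z(L))=d(T/Z(T))$ directly from the isomorphism $L/Z(L)\cong T/Z(T)$, whereas the paper routes this through Proposition \ref{prop0}; your explicit verification that $Z(L)=Z(T)\oplus A$ is a welcome detail the paper leaves implicit.
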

\begin{proof}
By using Lemma \ref{L1}, we have $ L=T\oplus A(k) $ for some $ k\geq 0$. Clearly  $ \dim L/Z(L)=\dim T/Z(T)$ and so  $ \dim (L/Z(L))^{2}=\dim (T/Z(T))^{2}. $  Hence 
$ d(L/Z(L))\\=d(T/Z(T))$ by using Proposition \ref{prop0}. On the other hand, $ \dim L^{2}=\dim T^{2}. $ Therefore 
the result follows.
\end{proof}
\begin{prop}\label{prop1}
Let $ L $ be an $ n $-dimensional nilpotent Lie algebras and $ L/Z(L) $ be generated  by $ d $ elements such that $ \dim L^{2}\geq 2. $ Then $ t(L)> 0. $
\end{prop}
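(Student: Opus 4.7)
The plan is to rewrite $t(L)$ explicitly in terms of $d$, $\dim L^{2}$, and $\dim(L^{2}\cap Z(L))$, and then bound the resulting expression below. Applying Proposition \ref{prop0} to the Lie algebra $L/Z(L)$, which is $d$-generated, gives
\[
d=\dim(L/Z(L))-\dim(L/Z(L))^{2}.
\]
Since $(L/Z(L))^{2}=(L^{2}+Z(L))/Z(L)$ has dimension $\dim L^{2}-\dim(L^{2}\cap Z(L))$, I get
\[
\dim L/Z(L)=d+\dim L^{2}-\dim(L^{2}\cap Z(L)).
\]
Plugging this into the defining equation $\dim L/Z(L)=d\dim L^{2}-t(L)$ rearranges to the clean formula
\[
t(L)=(d-1)\dim L^{2}-d+\dim(L^{2}\cap Z(L)).
\]

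From here, the task is to show the right-hand side is strictly positive, and two standard facts will do it. First, I would verify $d\geq 2$: if $d=1$, then $L/Z(L)$ is generated by a single element, hence one-dimensional and abelian; writing $L=Z(L)+\mathbb{F}x$ we would get $L^{2}=0$, contradicting $\dim L^{2}\geq 2$. Second, since $L$ is nilpotent and $L^{2}\neq 0$, every nonzero ideal of $L$ meets the center nontrivially (equivalently, the last nonzero term of the lower central series lies in $L^{2}\cap Z(L)$), so $\dim(L^{2}\cap Z(L))\geq 1$.

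Combining these, with $d\geq 2$ and $\dim L^{2}\geq 2$ we have $(d-1)\dim L^{2}\geq \dim L^{2}\geq 2\geq d$ when $d=2$, and more generally $(d-1)\dim L^{2}\geq 2(d-1)\geq d$ for $d\geq 2$; hence
\[
t(L)\geq \dim(L^{2}\cap Z(L))\geq 1>0.
\]
The only point requiring a little care is the first paragraph — correctly relating $\dim L/Z(L)$ to $d$, $\dim L^{2}$, and $\dim(L^{2}\cap Z(L))$ via Proposition \ref{prop0} — after which the two nilpotency observations finish the proof essentially by inspection.
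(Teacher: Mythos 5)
Your proof is correct, and it takes a genuinely different and considerably more direct route than the paper. The paper's argument is an induction on $n$: it derives a chain of inequalities involving the second center $Z_2(L)$ and the quotient $(L/Z(L))/Z(L/Z(L))$, splits into cases according to $\dim(L/Z(L))^2$, invokes the capability of $L/Z(L)$ together with a classification result for the case $\dim(L/Z(L))^2=1$, and checks the base case $n=4$ against Table 1. You instead apply Proposition \ref{prop0} once to $L/Z(L)$ to get the closed formula
\[
t(L)=(d-1)\dim L^{2}-d+\dim\bigl(L^{2}\cap Z(L)\bigr),
\]
and then use only two standard facts: a $1$-generated nilpotent Lie algebra is one-dimensional (so $\dim L^{2}\geq 2$ forces $d\geq 2$), and a nonzero ideal of a nilpotent Lie algebra meets the center (so $\dim(L^{2}\cap Z(L))\geq 1$). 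Both steps are sound, and the arithmetic $(d-1)\dim L^{2}\geq d$ for $d,\dim L^{2}\geq 2$ is right. What your approach buys is substantial: since $(d-1)\dim L^{2}-d+1=(d-1)(\dim L^{2}-1)\geq \dim L^{2}-1$, your formula gives $t(L)\geq \dim L^{2}-1$ outright, which simultaneously proves this proposition and also Propositions \ref{prop2} and \ref{prop3} (the cases $\dim L^{2}\geq 3$ and $\dim L^{2}\geq 4$) with no induction, no capability argument, and no appeal to the classification tables. The only cosmetic gap is that you dismiss $d=1$ but not $d=0$; the latter is immediate since $d=0$ means $L=Z(L)$ and hence $L^{2}=0$.
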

\begin{proof}
Let $ L $ be an $ n $-dimensional nilpotent Lie algebras such that $ \dim L^{2}\geq 2. $ We claim that $ t(L)\geq 1. $  
Let $ L/Z(L) $ be an  abelian Lie algebra, then $ \dim L/Z(L)=d $ and so
\begin{align}\label{eq2.1}
\dim L/Z(L)&=d\dim L^{2}-d\dim L^{2}+\dim L/Z(L) \cr
&= d\dim L^{2}+ \dim L/Z(L) (1-\dim L^{2})
\end{align} 
Since $ \dim L^{2}\geq 2, $   we have $\dim L/Z(L) \leq d\dim L^{2}-1$ by using \eqref{eq2.1} and so $ t(L)\geq 1. $\\
 We know that 
 $$ d((L/Z(L))/Z(L/Z(L)))= d(L/Z_{2}(L))$$
  and so 
\begin{align}\label{eq4}
d((L/Z(L))/Z(L/Z(L)))&=d(L/Z_{2}(L))\cr
&=\dim (L/Z_{2}(L))- \dim (L/Z_{2}(L))^{2}\cr
&=\dim (L/Z_{2}(L))- \dim (L^{2}+Z_2(L))/Z_2(L)\cr
&=\dim (L/Z(L))- \dim (L^{2}+Z_2(L))/Z(L)
\end{align}
 On the other hand,  
 \begin{align*}
 &\dim (L^{2}+Z_2(L))/Z(L)\cr
 &=\dim (L^{2}+Z_2(L))/ (L^{2}+Z(L)) +\dim (L^{2}+Z(L))/Z(L). 
   \end{align*}
   thus
 \begin{align*}
d((L/Z(L))/Z(L/Z(L)))&=d- \dim (L^{2}+Z_2(L))/ (L^{2}+Z(L)) 
\end{align*}
by using \eqref{eq4}.
Hence 
\begin{align}\label{e6}
& d(L/Z(L))/Z(L/Z(L)) \dim (L/Z(L))^{2}+\dim Z_2(L)/Z(L)\cr
&=d\dim L^{2}-d \dim L^{2}\cap Z(L)- \dim (L^{2}+Z_2(L))/ (L^{2}+Z(L)) \dim(L/Z(L))^{2}\cr
&+\dim Z_2(L)/Z(L).\cr
\end{align}
Also, $ d=\dim L/(L^{2}+Z(L)) $ and $  \dim L^{2}\cap Z(L)\geq 1, $ hence
\begin{align*}
& d(L/Z(L))/Z(L/Z(L)) \dim (L/Z(L))^{2}+\dim Z_2(L)/Z(L)\cr
&=d\dim L^{2}-d (\dim L^{2}\cap Z(L)-1)-\dim L/(L^{2}+Z(L))\cr
&- \dim (L^{2}+Z_2(L))/ (L^{2}+Z(L)) \dim(L/Z(L))^{2}+\dim Z_2(L)/Z(L).
\end{align*}
Since
\begin{align*}
& \dim Z_2(L)/Z(L) - \dim (L^{2}+Z_2(L))/ (L^{2}+Z(L)) \dim(L/Z(L))^{2}-\dim L/(L^{2}+Z(L))\cr
&\leq  \dim Z_2(L)/Z(L)-\dim(L/Z(L))^{2}-\dim L/(L^{2}+Z(L))\cr
&=\dim Z_2(L)/Z(L)-\dim L/Z(L)\cr
&=-\dim L/Z_2(L),
\end{align*}
we have
\begin{align}\label{eq2.5}
&d(L/Z(L))/Z(L/Z(L)) \dim (L/Z(L))^{2}+\dim Z_2(L)/Z(L)\cr
&\leq d\dim L^{2}-d (\dim L^{2}\cap Z(L)-1)-\dim L/Z_2(L).
\end{align}
Since $\dim (L/Z(L))/Z(L/Z(L))\leq d(L/Z(L))/Z(L/Z(L)) \dim (L/Z(L))^{2}, $ we have 
\begin{equation}\label{eq2.4}
\dim L/Z(L) \leq d\dim L^{2}-d (\dim L^{2}\cap Z(L)-1)-\dim L/Z_2(L).
\end{equation}
by using \eqref{eq2.5}.\\
Assume that $ \dim( L/Z(L))^{2}=1. $ Since $ \dim( L/Z(L))^{2}=1 $  and $ L/Z(L) $ is capable,  $ L/Z(L)\cong H(1)\oplus A(n-k-3) $ such that $ \dim Z(L)=k\geq 1$ by using Lemma \ref{l0} and \cite[Theorem 3.5]{P}. Hence $ \dim Z_2(L)=n-2 $ and so $ \dim L/Z_2(L)=2. $
Thus $ t(L)\geq 2 $
 by using \eqref{eq2.4}.
Now, let $ \dim (L/Z(L))^{2}\geq 2. $ We proceed by introduction on $ n $ to prove our result. Since $ \dim L^{2}\geq 2, $  $ n\geq 4. $  If $ n=4, $   then  there is no $ 4 $-dimensional nilpotent Lie algebra $ L $ such that $t(L)=0 $  by looking at the Table \ref{ta1} and $ t(L)\geq 1. $ Since  $ \dim (L/Z(L))^{2}\geq 2, $  we have 
\begin{equation}\label{e3}
 \dim (L/Z(L))/Z(L/Z(L))\leq d(L/Z(L))/Z(L/Z(L)) \dim (L/Z(L))^{2}-1
\end{equation}
by using the induction hypothesis. 
Therefore  $ t(L)\geq 1 $ by using \eqref{e3} and \eqref{eq2.5}, as required.
\end{proof}
\begin{thm}\label{th1}
Let $ L $ be an  $ n $-dimensional nilpotent Lie algebra such that  $ L/Z(L) $ be generated by $ d $ elements and  $ t(L)=0.$ Then $ L $ is isomorphic to  $ A(n) $ for all $ n\geq 1$ or  $ H(m)\oplus A(n-2m-1) $ for all $ m\geq 1. $   
\end{thm}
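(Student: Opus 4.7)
The plan is to reduce to the case $\dim L^2 \le 1$ using Proposition \ref{prop1}, then apply Lemma \ref{l0} and check that the candidate algebras really do achieve $t(L)=0$.

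First I would observe that $t(L)=0$ means $\dim L/Z(L) = d\,\dim L^2$. Proposition \ref{prop1} tells us that whenever $\dim L^2 \ge 2$ we have $t(L) \ge 1$. So the hypothesis $t(L)=0$ immediately forces $\dim L^2 \le 1$, and we are reduced to two easy cases.

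Case $\dim L^2 = 0$: then $L$ is abelian, so $L \cong A(n)$ and nothing more needs to be said (here $d=0$ and $\dim L/Z(L)=0$, so $t(L)=0$ trivially).

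Case $\dim L^2 = 1$: here Lemma \ref{l0} immediately produces the decomposition $L \cong H(m) \oplus A(n-2m-1)$ for some $m \ge 1$. It remains to check this list is accurate, i.e.\ that these algebras genuinely satisfy $t(L)=0$. For $L = H(m) \oplus A(n-2m-1)$, the center is $Z(L) = Z(H(m)) \oplus A(n-2m-1)$, so $\dim L/Z(L)=2m$; since $L/Z(L)$ is abelian of dimension $2m$, Proposition \ref{prop0} gives $d = d(L/Z(L)) = 2m$; and $\dim L^2 = 1$, so $d\,\dim L^2 = 2m = \dim L/Z(L)$, confirming $t(L)=0$.

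There is essentially no obstacle here: Proposition \ref{prop1} does all the heavy lifting by ruling out $\dim L^2 \ge 2$, and Lemma \ref{l0} classifies the remaining one-dimensional-derived case. The only ingredient that is not quoted verbatim from earlier is the trivial verification that the abelian and Heisenberg-type algebras actually lie on the boundary $t=0$, which is a one-line computation using Proposition \ref{prop0}.
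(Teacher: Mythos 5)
Your proof is correct and follows essentially the same route as the paper: invoke Proposition \ref{prop1} to force $\dim L^{2}\leq 1$, then apply Lemma \ref{l0}. The only addition is your explicit verification that the listed algebras actually attain $t(L)=0$, which the paper defers to the Main Theorem ("the converse is obvious") but which is a worthwhile sanity check.
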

\begin{proof}
If $ \dim L^{2}\geq 2, $ then there is no such a nilpotent Lie algebra $ L $ by using Proposition \ref{prop1}. Hence $ \dim L^{2}\leq 1 $ and so $ L $ is isomorphic to an abelian Lie algebra $ A(n) $ for all $ n\geq 1$ or $ H(m)\oplus A(n-2m-1) $ for all $ m\geq 1$ by using Lemma \ref{l0}.
\end{proof}
\begin{prop}\label{prop2}
Let $ L $ be an $ n $-dimensional nilpotent Lie algebra, $ L/Z(L) $ be $ d $-generated and $ \dim L^{2}\geq 3. $ Then $ t(L)\geq 2. $
\end{prop}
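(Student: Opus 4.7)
The plan is to strengthen the key estimate from the proof of Proposition~\ref{prop1} by retaining the term $t(L/Z(L))$ that was discarded there, and then to induct on $\dim L$. Starting from \eqref{eq2.5} and using the identity
\[
d((L/Z(L))/Z(L/Z(L))) \dim (L/Z(L))^2 = \dim L/Z_2(L) + t(L/Z(L)),
\]
which is just the definition of $t(L/Z(L))$, I rearrange to obtain the recursion
\[
t(L) \geq t(L/Z(L)) + d\bigl(\dim L^2 \cap Z(L) - 1\bigr) + \dim L/Z_2(L).
\]

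Two easy facts support the induction. Since $\dim L^2 \geq 3$, the algebra $L$ is non-abelian and $L/Z(L)$ cannot be cyclic, so $d \geq 2$; since $L$ is nilpotent with $L^2 \neq 0$, we have $\dim L^2 \cap Z(L) \geq 1$. With these recorded, I would induct on $n$ and split on the value of $\dim (L/Z(L))^2$, noting that every term on the right-hand side of the recursion above is non-negative.

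If $\dim (L/Z(L))^2 \geq 3$, the induction hypothesis applied to the strictly smaller nilpotent algebra $L/Z(L)$ gives $t(L/Z(L)) \geq 2$, and the recursion yields $t(L) \geq 2$. If $\dim (L/Z(L))^2 = 2$, Proposition~\ref{prop1} gives $t(L/Z(L)) \geq 1$, so one more unit must come from the remaining two terms: if $\dim L^2 \cap Z(L) \geq 2$ the middle term contributes $d \geq 2$; otherwise $\dim L^2 \cap Z(L) = 1$, and I observe that $\dim L/Z_2(L) \geq 1$, because $L = Z_2(L)$ would force $L^2 \subseteq Z(L)$ and hence $\dim L^2 = \dim L^2 \cap Z(L) = 1$, contradicting $\dim L^2 \geq 3$. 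If $\dim (L/Z(L))^2 = 1$, Lemma~\ref{l0} writes $L/Z(L) \cong H(m) \oplus A(k)$ with $m \geq 1$, so $\dim L/Z_2(L) = 2m \geq 2$. Finally, if $\dim (L/Z(L))^2 = 0$ then $L$ has nilpotency class~$2$, so $L^2 \subseteq Z(L)$ and the recursion gives $t(L) \geq d(\dim L^2 - 1) \geq 4$.

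The main technical step is the derivation of the recursion; once available, the case analysis is essentially forced. The only subtle point is the sub-case $\dim (L/Z(L))^2 = 2$ with $\dim L^2 \cap Z(L) = 1$, where the extra unit must be extracted from $\dim L/Z_2(L)$ via the class-two obstruction above.
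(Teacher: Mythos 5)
Your recursion
\[
t(L)\;\geq\; t(L/Z(L)) + d\bigl(\dim (L^{2}\cap Z(L))-1\bigr) + \dim L/Z_{2}(L)
\]
is exactly inequality \eqref{eq2.5} rewritten through the definition of $t(L/Z(L))$ (together with $\dim L/Z(L)=\dim L/Z_{2}(L)+\dim Z_{2}(L)/Z(L)$), and your case split on $\dim (L/Z(L))^{2}\in\{0,1,2,\geq 3\}$ is the same one the paper performs, using Proposition \ref{prop1} when $\dim(L/Z(L))^2=2$ and induction when $\dim(L/Z(L))^{2}\geq 3$. So in substance you have reproduced the paper's argument, only packaged more transparently; your extra care (why $d\geq 2$, why $\dim L/Z_{2}(L)\geq 1$ in the sub-case $\dim(L/Z(L))^{2}=2$ with $\dim (L^{2}\cap Z(L))=1$ where the paper merely asserts a disjunction, and the observation that no Table~1 base case is needed because the only recursive branch strictly drops the dimension) genuinely tightens the write-up.

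The one substantive problem is that the recursion --- equivalently \eqref{eq2.5} itself --- is false as stated, and your proof inherits this defect from the paper rather than repairing it. Test it on $L_{4,3}$: there $t(L)=2\cdot 2-3=1$, while $t(L/Z(L))=t(H(1))=0$, $\dim (L^{2}\cap Z(L))=1$ and $\dim L/Z_{2}(L)=2$, so the recursion would give $t(L)\geq 2$. The source of the error is the step in the derivation of \eqref{eq2.5} that replaces $\dim \bigl((L^{2}+Z_{2}(L))/(L^{2}+Z(L))\bigr)\cdot \dim(L/Z(L))^{2}$ by $\dim(L/Z(L))^{2}$; this silently assumes $Z_{2}(L)\not\subseteq L^{2}+Z(L)$, which fails for $L_{4,3}$ and for filiform algebras generally. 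Your algebraic rearrangement and the ensuing case analysis are correct \emph{given} \eqref{eq2.5}, but since the whole proof hinges on that inequality, the argument is only as sound as the paper's own; a genuinely complete proof would have to either establish \eqref{eq2.5} under a hypothesis covering every case in which you invoke the recursion, or rework the bookkeeping keeping the factor $\dim\bigl((L^{2}+Z_{2}(L))/(L^{2}+Z(L))\bigr)$ in place.
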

\begin{proof}
Let $ \dim (L/Z(L))^{2}=0. $ Since $ \dim L^{2}\geq 3, $  we have $ t(L) \geq 2$ by using \eqref{eq2.1}.
If $ \dim (L/Z(L))^{2}=1, $ then $\dim L/Z_2(L)=2 $ by using a similar method used in the proof of Proposition \ref{prop1}. So 
$ t(L)\geq 2 $  by using \eqref{eq2.4}.
Now, let $ \dim (L/Z(L))^{2}=2. $ Then
\begin{align}\label{eq2.6}
 \dim (L/Z(L))/Z(L/Z(L))\leq d(L/Z(L))/Z(L/Z(L)) \dim (L/Z(L))^{2}-1
\end{align}
 by using Proposition \ref{prop1}. Also, $ \dim (L^{2}\cap Z(L))-1\geq 2 $ or $ \dim L/Z_2(L)\geq 1 $ thus   
 $ t(L)\geq 2$ by using \eqref{eq2.6} and \eqref{eq2.5}.
Let $ \dim (L/Z(L))^{2}\geq 3. $ Since $ \dim L^{2}\geq 3, $ we have $ \dim L\geq 5. $ If $ n=5, $ then there is no such a Lie algebra by looking at Table \ref{ta1}. By using the induction hypothesis,
\begin{equation}\label{le}
 \dim (L/Z(L))/Z(L/Z(L))\leq d(L/Z(L))/Z(L/Z(L)) \dim (L/Z(L))^{2}-2.
\end{equation}
  One can see  $ t(L)\geq 2 $ by using  \eqref{le} and \eqref{eq2.5}.
\end{proof}
\begin{thm}\label{th2}
Let $ L $ be an  $ n $-dimensional nilpotent Lie algebra and   $ L/Z(L) $ be generated by $ d $ elements such that   $ t(L)=1.$   Then $ L $ is isomorphic to  $ L_{4,3}\oplus A(n-4) $ for all $ n\geq 4. $
\end{thm}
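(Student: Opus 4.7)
The plan is to use the two preparatory propositions to pin down $\dim L^{2}$ exactly, then reduce to the stem case via Lemma \ref{l1} and carry out a small dimension argument.

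First I would bound $\dim L^{2}$. Proposition \ref{prop2} forces $\dim L^{2}\le 2$ (otherwise $t(L)\ge 2$). If $\dim L^{2}=0$, then $L$ is abelian and $t(L)=0$; if $\dim L^{2}=1$, Lemma \ref{l0} gives $L\cong H(m)\oplus A(n-2m-1)$, and a direct computation ($\dim L/Z(L)=2m$, $d=2m$, $\dim L^{2}=1$) yields $t(L)=0$. So necessarily $\dim L^{2}=2$.

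Next I would reduce to a stem Lie algebra. By Lemma \ref{l1}, write $L=T\oplus A(k)$ with $T$ a stem Lie algebra satisfying $\dim T^{2}=\dim L^{2}=2$ and $t(T)=t(L)=1$. Since $T$ is stem, $Z(T)\subseteq T^{2}$, so $\dim Z(T)\in\{1,2\}$. If $\dim Z(T)=2$, then $T^{2}=Z(T)$ and $T/Z(T)$ is abelian, whence $d(T/Z(T))=\dim T/Z(T)$ and the defining equation $\dim T/Z(T)=2\,d-1$ gives $\dim T/Z(T)=1$; but a Lie algebra with one-dimensional $T/Z(T)$ must be abelian, contradicting $\dim T^{2}=2$. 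Therefore $\dim Z(T)=1$, and consequently $\dim(T^{2}\cap Z(T))=1$.

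Now I would compute $d(T/Z(T))$ explicitly. Using Proposition \ref{prop0} applied to $T/Z(T)$ (or equivalently $d(T/Z(T))=\dim T/(T^{2}+Z(T))$), together with $\dim Z(T)=\dim(T^{2}\cap Z(T))=1$ and $\dim T^{2}=2$, one obtains $d(T/Z(T))=\dim T-2$. Combining with $\dim T/Z(T)=\dim T-1$, the assumption $t(T)=1$ becomes
\begin{equation*}
\dim T-1=2(\dim T-2)-1,
\end{equation*}
so $\dim T=4$. The classification of $4$-dimensional nilpotent Lie algebras (cf. Table \ref{ta1} together with the abelian and Heisenberg cases ruled out in the first step) shows that the only such algebra with $\dim T^{2}=2$ is $L_{4,3}$. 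Therefore $L\cong L_{4,3}\oplus A(n-4)$, and one checks conversely that this algebra has $t(L)=1$ for every $n\ge 4$.

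The main obstacle, and really the only nontrivial part, is the stem reduction: once $\dim L^{2}=2$ is forced and one knows $\dim Z(T)=1$, the argument collapses to an arithmetic identity that singles out $\dim T=4$, after which the classification of low-dimensional nilpotent Lie algebras finishes the proof.
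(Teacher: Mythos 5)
Your proposal is correct and follows essentially the same route as the paper: Proposition \ref{prop2} and Lemma \ref{l0} force $\dim L^{2}=2$, Lemma \ref{l1} reduces to the stem case, and an arithmetic identity pins down $\dim T=4$ before invoking the classification. Your case split on $\dim Z(T)\in\{1,2\}$ is just the paper's split by nilpotency class ($2$ or $3$) in different clothing, and computing $d(T/Z(T))$ directly from Proposition \ref{prop0} rather than via $T/Z(T)\cong H(1)\oplus A(\cdot)$ is a cosmetic (arguably cleaner) variation.
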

\begin{proof}
If $ \dim L^{2}\leq 1, $ then $ L $ is isomorphic to $ A(n) $  for all $ n\geq 1 $ or $ H(m)\oplus A(n-2m-1) $ for all $ m\geq 1 $ and so $ t(L)=0. $ Hence there is no such a nilpotent Lie algebra with  $  t(L)=1$ when  $ \dim L^{2}\leq 1. $
Assume that $ \dim L^{2}\geq 3. $ Since $ \dim L^{2}\geq 3, $
 there is no such a nilpotent Lie algebra $ L $  with  $  t(L)=1$ by using Proposition \ref{prop2}. Let $ \dim L^{2}=2. $ Then $ L $ is of nilpotency  two or three. In the rest it is sufficient to obtain the structure of all stem nilpotent Lie algebras  $ L $ with $ t(L)\geq 2 $ by using Lemma \ref{l1}  and then we determine   the structure of  all nilpotent Lie algebras  $ L $ with $ t(L)\geq 2. $    Assume that $ L $ is a stem Lie algebra  of nilpotency class two.  Hence $ Z(L)=L^{2} $ and so   $ d(L/Z(L))=\dim L/Z(L). $ On the other hand, $  t(L)=1$ by our assumption. Thus $ n=3. $ But there is no $ 3 $-dimensional stem  nilpotent Lie algebra $ L $ such that  $  t(L)=1$ by looking at the classification of nilptent Lie algebra in \cite{G}.  Let $ L $ be a stem Lie algebra of nilpotency class $ 3. $ Then $ \dim Z(L)=1 $ and $ L/Z(L)\cong H(1)\oplus A(n-4). $ Hence $ d(L/Z(L))=n-2 $ and $ \dim L/Z(L)=n-1. $ Since  $  t(L)=1 $ by our assumption, we have $ n=4. $ By looking at Table \ref{ta1}, we have $ L\cong L_{4,3} $ when $ L $ is a stem nilpotent Lie algebra. Therefore  $ L $ is isomorphic to $ L_{4,3}\oplus A(n-4) $ for all $ n\geq 4 $ by using Lemma \ref{l1}.  
\end{proof}
\begin{prop}\label{prop3}
Let $ L $ be an $ n $-dimensional nilpotent Lie algebra, $ L/Z(L) $ be $ d $-generated and $ \dim L^{2}\geq 4. $ Then $ t(L)\geq 3. $
\end{prop}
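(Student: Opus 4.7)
The plan is to mirror the structure of the proofs of Propositions \ref{prop1} and \ref{prop2}, splitting into cases according to $s:=\dim(L/Z(L))^{2}$ and invoking induction on $n$ only in the top case.

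When $s=0$, the Lie algebra $L/Z(L)$ is abelian, and equation \eqref{eq2.1} gives
\[t(L)=(\dim L^{2}-1)\dim L/Z(L)\geq 3,\]
because $\dim L^{2}\geq 4$ and $L$ is non-abelian. When $s=1$, the argument in Proposition \ref{prop1} shows that $L/Z(L)\cong H(1)\oplus A(m)$ for some $m\geq 0$, so $\dim L/Z_{2}(L)=2$ and $\dim L^{2}\cap Z(L)=\dim L^{2}-1\geq 3$; substituting into \eqref{eq2.4} yields $t(L)\geq d(\dim L^{2}\cap Z(L)-1)+\dim L/Z_{2}(L)\geq 2+2=4$.

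For the intermediate cases $s\in\{2,3\}$, I would apply Propositions \ref{prop1} and \ref{prop2} respectively to the quotient $L/Z(L)$, obtaining
\[\dim(L/Z(L))/Z(L/Z(L))\leq d\bigl((L/Z(L))/Z(L/Z(L))\bigr)\cdot s-\sigma\]
with $\sigma=1$ for $s=2$ and $\sigma=2$ for $s=3$. Combining this with \eqref{eq2.5} and the decomposition $\dim L/Z(L)=\dim(L/Z(L))/Z(L/Z(L))+\dim Z_{2}(L)/Z(L)$ produces the uniform template
\[t(L)\geq d\bigl(\dim L^{2}\cap Z(L)-1\bigr)+\dim L/Z_{2}(L)+\sigma.\]
For $s=2$ this gives $t(L)\geq 2\cdot 1+1+1=4$ (using $d\geq 2$ since $L/Z(L)$ is non-abelian, $\dim L^{2}\cap Z(L)\geq 2$, and $\dim L/Z_{2}(L)\geq 1$). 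For $s=3$ it gives the tight bound $t(L)\geq 0+1+2=3$.

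For $s\geq 4$ I would induct on $n$. The base case is $n=6$, the smallest dimension in Table \ref{ta1} admitting $\dim L^{2}\geq 4$, and a direct inspection of those entries (each with $d=2$ and $\dim L/Z(L)\in\{4,5\}$) shows that $t(L)\geq 3$. For $n\geq 7$, the quotient $L/Z(L)$ has dimension strictly less than $n$, so the induction hypothesis gives $\sigma=3$ in the template above, forcing $t(L)\geq 0+1+3=4$. The step I expect to be the main obstacle is the $s=3$ case, where the inequality is tight: all three contributions ($d(\dim L^{2}\cap Z(L)-1)$, $\dim L/Z_{2}(L)\geq 1$, and $\sigma=2$) must be extracted simultaneously, and the middle term relies on carefully noting that $L/Z(L)$ is non-abelian so its center is a proper subalgebra.
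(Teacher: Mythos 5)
Your proof is correct and follows essentially the same route as the paper: a case split on $\dim (L/Z(L))^{2}$, using \eqref{eq2.1} in the abelian case, the capability/Heisenberg argument when $\dim (L/Z(L))^{2}=1$, Propositions \ref{prop1} and \ref{prop2} applied to $L/Z(L)$ in the intermediate cases, and induction on $n$ when $\dim (L/Z(L))^{2}\geq 4$, all combined through \eqref{eq2.5} exactly as the paper does. The only difference is cosmetic: in the case $\dim (L/Z(L))^{2}=2$ you bound $d\geq 2$ and $\dim (L^{2}\cap Z(L))\geq 2$ directly, which lets you skip the paper's sub-case analysis via Theorem \ref{th2}.
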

\begin{proof}
Let $ \dim (L/Z(L))^{2}=0. $ Since $ \dim L^{2}\geq 4, $  we have $ t(L) \geq 3$ by using \eqref{eq2.1}.
Assume that  $ \dim (L/Z(L))^{2}=1. $ Since $ L/Z(L) $ is capable,  we have 
 $ L/Z(L)\cong H(1)\oplus A(n-3-k) $ 
such that $ \dim Z(L)=k\geq 1 $ by using Lemma \ref{l0} and \cite[Theorem 3.5]{P}.  Hence $ \dim L^{2}/L^{2} \cap Z(L)=1. $
Since  $ \dim L^{2}\geq 4, $ we have
$\dim L^{2} \cap Z(L) \geq 3. $ Also, $ \dim L/Z_2(L)\geq 2. $ On the other hand,   $\dim L^{2} \cap Z(L) \geq 3, $ $ \dim L/Z_2(L)\geq 2 $ and
$ \dim L/Z(L)/Z(L/Z(L))\leq d(L/Z_2(L))\dim (L/Z(L))^{2} $  thus $ t(L)\geq 3 $ by using \eqref{eq2.5}.
Assume that $ \dim (L/Z(L))^{2}= 2.$  By using  Proposition \ref{prop1}  we have
\begin{equation}\label{e4}
\dim L/Z(L)/Z(L/Z(L))\leq d(L/Z_2(L))\dim (L/Z(L))^{2}-1.
\end{equation}
If $ t(L/Z(L))=1, $ then  $ L/Z(L)\cong L_{4,3}\oplus A(n-4-k) $ such that $ \dim Z(L)=k\geq 1 $ by using Theorem \ref{th2}. 
 Since  $L/Z(L)\cong L_{4,3}\oplus A(n-4-k) $ such that $ \dim Z(L)=k\geq 1 $  and since $ \dim L^{2}\geq 4, $ we have $ \dim L^{2}\cap Z(L)\geq 2.$ Also, $ \dim L/Z_2(L)\geq 1. $ 
 Therefore $ t(L)\geq 3$  by using \eqref{e4} and \eqref{eq2.5}. If $ \dim L/Z(L) $ does not attain the upper bound \eqref{e4}, then
 \begin{equation}\label{eq2.9}
\dim L/Z(L)/Z(L/Z(L))\leq d(L/Z_2(L))\dim (L/Z(L))^{2}-2
\end{equation}
and so 
\begin{align}\label{eq2.10}
\dim L/Z(L) \leq d\dim L^{2}-d (\dim L^{2}\cap Z(L)-1)-\dim L/Z_2(L)-2
\end{align}
by using \eqref{eq2.5}.
 Since $ (\dim (L^{2}\cap Z(L))-1)\geq 3 $ or $ \dim L/Z_2(L)\geq 1, $  we have $ t(L)\geq 3. $
  If $ \dim (L/Z(L))^{2}= 3,$ then
 \begin{equation}\label{eq2.11}
 \dim (L/Z(L))/Z(L/Z(L))\leq d(L/Z(L))/Z(L/Z(L)) \dim (L/Z(L))^{2}-2
\end{equation}
by using Proposition \ref{prop2}. On the other hand, $ \dim (L^{2}\cap Z(L))-1\geq 3 $ or $ \dim L/Z_2(L)\geq 1 $ thus $ t(L)\geq 3 $ by using \eqref{eq2.11} and \eqref{eq2.5}.
  \\
Let  $ \dim (L/Z(L))^{2}\geq  4.$ Then we claim that $ t(L)\geq 3. $ 
Since   $ \dim L^{2}\geq 4, $ we have $ \dim L\geq 6. $ If $ n=6, $ then $ t(L)\geq 3 $
for of all nilpotent  Lie algebras $ L $ with $ \dim L^{2}\geq 4 $ by  looking at Table \ref{ta1}. By using the induction hypothesis,
\begin{equation}\label{eq2.12}
 \dim (L/Z(L))/Z(L/Z(L))\leq d(L/Z(L))/Z(L/Z(L)) \dim (L/Z(L))^{2}-3.
\end{equation}
 Since $ (\dim (L^{2}\cap Z(L))-1)\geq 3 $ or $ \dim L/Z_2(L)\geq 1, $ one can see  $ t(L)\geq 3 $
   by using \eqref{eq2.12} and \eqref{eq2.5}.
  \end{proof}
\begin{thm}\label{th3}
Let $ L $ be an  $ n $-dimensional nilpotent Lie algebra and   $ L/Z(L) $ be generated by $ d $ elements such that   $ t(L)=2.$   Then $ L $ is isomorphic to one of Lie algebras  $ L_{5,5}\oplus A(n-5), $ $ L_{5,6}\oplus A(n-5) $ or $ L_{5,7}\oplus A(n-5) $ for all $ n\geq 5. $
\end{thm}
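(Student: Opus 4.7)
The proof will parallel that of Theorem \ref{th2}. First I would narrow the possible value of $\dim L^2$: if $\dim L^2 \leq 1$, then Lemma \ref{l0} forces $L \cong A(n)$ or $L \cong H(m) \oplus A(n-2m-1)$, giving $t(L) = 0$; and Proposition \ref{prop3} rules out $\dim L^2 \geq 4$. Hence $\dim L^2 \in \{2, 3\}$. By Lemma \ref{l1}, it suffices to classify stem Lie algebras $T$ with $t(T) = 2$ and $\dim T^2 \in \{2, 3\}$, after which $L \cong T \oplus A(n - \dim T)$. For any such stem $T$, Proposition \ref{prop0} gives $d = n - \dim T^2$, and the stem condition $Z(T) \subseteq T^2$ lets me rewrite
\[
t(T) = (n - \dim T^2)\dim T^2 - (n - \dim Z(T)) = 2
\]
as an equation in $n$ once the pair $(\dim T^2, \dim Z(T))$ is fixed.

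Next I would organize the casework by this pair. The pairs $(2,2)$ and $(3,3)$ correspond to $T$ of nilpotency class two, so $Z(T) = T^2$; both yield $n = 4$, but this contradicts the combinatorial inequality $\binom{n - \dim T^2}{2} \geq \dim T^2$ a class-two algebra must satisfy. The pair $(3,2)$ gives $2n - 7 = 2$, with no integer solution. The only surviving cases are $(2,1)$, which yields $n = 5$ with $\dim(T/Z(T))^2 = 1$, and $(3,1)$, which yields $n = 5$ with $\dim(T/Z(T))^2 = 2$. In the first subcase, capability of $T/Z(T)$ combined with Lemma \ref{l0} and \cite[Theorem 3.5]{P} pins down $T/Z(T) \cong H(1)$. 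In the second, applying Theorem \ref{th2} to $T/Z(T)$ (which is $4$-dimensional with $\dim(T/Z(T))^2 = 2$, so $t(T/Z(T)) \leq 1$) or a direct inspection of Table \ref{ta1} forces $T/Z(T) \cong L_{4,3}$.

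With $n = 5$ in both surviving subcases, I would finish by enumerating the $5$-dimensional stem nilpotent Lie algebras from the classification \cite{G}, namely $L_{5,5}, L_{5,6}, L_{5,7}, L_{5,8}, L_{5,9}$, and matching invariants against Table \ref{ta1}. The pair $(\dim T^2, \dim Z(T)) = (2,1)$ is realized uniquely by $L_{5,5}$, while $(3,1)$ is realized exactly by $L_{5,6}$ and $L_{5,7}$; the algebras $L_{5,8}$ (class two) and $L_{5,9}$ (with $\dim Z = 2$) are automatically excluded. An application of Lemma \ref{l1} then yields the three families in the statement. The main obstacle I anticipate is the $(3,1)$-subcase: one must argue that $T/Z(T)$ is forced to be the capable algebra $L_{4,3}$, and then identify the capable central extensions $L_{5,6}$ and $L_{5,7}$ as the only stem enlargements compatible with the parameters $(\dim T^2, \dim Z(T)) = (3,1)$.
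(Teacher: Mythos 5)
Your proposal is correct and follows essentially the same route as the paper: reduce to stem algebras via Lemma \ref{l1}, pin $\dim L^{2}$ to $\{2,3\}$ using Lemma \ref{l0} and Proposition \ref{prop3}, force $n=5$ from the numerical constraint, and read off $L_{5,5}$, $L_{5,6}$, $L_{5,7}$ from Table \ref{ta1}. The only difference is cosmetic: you organize the casework by the pair $(\dim T^{2},\dim Z(T))$ rather than by nilpotency class, which makes explicit the cases the paper dismisses with ``by a similar method.''
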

\begin{proof}
If $ \dim L^{2}\geq 4, $ then there is no such a nilpotent Lie algebra by using Proposition \ref{prop3}. 
Hence $ \dim L^{2}\leq 3. $ If $ \dim L^{2}\leq 1, $ then  $ L $ is isomorphic to $ A(n) $ for $ n\geq 1 $ or $ H(m)\oplus A(n-2m-1) $ for $ m\geq 1 $  by using Lemma \ref{l0} and so $ t(L)=0 .$ Hence in this case there is no such a nilpotent Lie algebras $ L $ with $ t(L)=2. $
 Let $ \dim L^{2}=2 $ and $ t(L)=2.$ It is sufficient to obtain the structure of all  stem nilpotent Lie algebras $ L $  with $ t(L)=2 $ and then we can determine   the structure of  all nilpotent Lie algebras $ L $ with $ t(L)=2 $  by using Lemma \ref{l1}. 
Let $ L $ is of nilpotency class two and stem. Since $ t(L)=2 $ and $ L $ is of nilpotency class two, we have $ Z(L)=L^{2} $ and so $ n=4. $
Now, by looking at the Table \ref{ta1} there is no such a nilpotent Lie algebra $ L$ with $ t(L)=2. $ Let  $ L $ be of nilpotency class three and stem. Then $ \dim Z(L)=1. $ Since  $ t(L)=2 $  and $ \dim Z(L)=1, $ we have  $ \dim L=5 .$
Thus $ L $ is isomorphic to  $ L_{5,5} $ by looking at  Table \ref{ta1}.
In the case $ \dim L^{2}=3 $ and $ L $ is a stem nilpotent Lie algebra $ L, $  we can see that $ \dim L $ is equal to $ 4 $ or $ 5 $ by using a similar method and so $ L $ is isomorphic to one of the  $ L_{5,6} $ or $ L_{5,7} $ by using at Table \ref{ta1}.
Therefore $ L $ is isomorphic to one of Lie algebras  $ L_{5,5}\oplus A(n-5), $ $ L_{5,6}\oplus A(n-5) $ or $ L_{5,7}\oplus A(n-5) $ for all $ n\geq 5 $ by using Lemma \ref{l1}. 
\end{proof}
\begin{theorem}
Let $ L $ be an  $ n $-dimensional nilpotent Lie algebra such that  $ L/Z(L) $ is generated by $ d $ elements. Then 
\begin{itemize}
\item[(i).] $ t(L)=0$ if and only if  $ L $ is isomorphic to one of Lie algebras $ A(n) $ for all $ n\geq 1$ or  $ H(m)\oplus A(n-2m-1) $ for all $ m\geq 1, $   
\item[(ii).] $ t(L)=1$ if and only if $ L $ is isomorphic to $ L_{4,3}\oplus A(n-4) $ for all $ n\geq 4, $  
\item[(iii).] $ t(L)=2$  if and only if $ L $ is isomorphic to $ L_{5,5}\oplus A(n-5), $ $ L_{5,6}\oplus A(n-5) $ or $ L_{5,7}\oplus A(n-5) $ for all $ n\geq 5. $
\end{itemize}

\end{theorem}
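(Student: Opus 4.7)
The plan is to observe that the Main Theorem is simply the consolidation of Theorems \ref{th1}, \ref{th2}, \ref{th3} into a single biconditional statement, together with the easy forward checks that the listed Lie algebras actually attain the stated values of $t(L)$. In each of (i), (ii), (iii) the ``only if'' direction is exactly one of the preceding theorems, so what remains is to verify, for each listed algebra $L$, that $t(L) = d\dim L^{2} - \dim L/Z(L)$ has the claimed value. By Lemma \ref{l1}, direct-summing a stem Lie algebra with an abelian factor preserves $t$, so it suffices to check the stem summand in every case.

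For (i), the abelian case $L=A(n)$ is immediate: $L/Z(L)=0$, $L^{2}=0$, and $t(L)=0$ trivially. For $L=H(m)\oplus A(n-2m-1)$, the abelian summand lies in $Z(L)$, so $L/Z(L)\cong H(m)/Z(H(m))$ is abelian of dimension $2m$ and is minimally generated by $2m$ elements; since $\dim L^{2}=1$, one gets $t(L)=2m\cdot 1-2m=0$, as required.

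For (ii) and (iii), I would reduce each candidate $L$ to its stem summand by Lemma \ref{l1} and then simply read the invariants off Table \ref{ta1}. For $L_{4,3}$ one finds $\dim L/Z(L)=3,\ d=2,\ \dim L^{2}=2$, giving $t=2\cdot 2-3=1$, which establishes (ii). For (iii), Table \ref{ta1} yields $\dim L/Z(L)=4,\ d=3,\ \dim L^{2}=2$ for $L_{5,5}$ (so $t=3\cdot 2-4=2$), and $\dim L/Z(L)=4,\ d=2,\ \dim L^{2}=3$ for both $L_{5,6}$ and $L_{5,7}$ (so $t=2\cdot 3-4=2$).

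There is no real obstacle here: the substantive content lies in Propositions \ref{prop1}, \ref{prop2}, \ref{prop3} (which produce the lower bounds on $t(L)$ according to $\dim L^{2}$) and in Theorems \ref{th1}--\ref{th3} (which then extract the complete classification). The Main Theorem packages these three classifications together, and the only additional work is the routine direct verification above that the listed algebras really do realise $t(L)=0,1,2$. The closest thing to a subtlety is ensuring the reduction to stem Lie algebras, but this is precisely the content of Lemma \ref{l1}.
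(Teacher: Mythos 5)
Your proposal is correct and follows essentially the same route as the paper: the paper likewise obtains the three equivalences by citing Theorems \ref{th1}, \ref{th2} and \ref{th3} for the forward directions and dismisses the converse as obvious, which is exactly the routine verification (via Lemma \ref{l1} and Table \ref{ta1}) that you carry out explicitly. Your computations of $t(L)$ for each listed algebra are accurate, so nothing is missing.
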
 
\begin{proof}
The  results follow by using Theorems \ref{th1}, \ref{th2} and \ref{th3}.
  The converse of theorem is obvious.
\end{proof}

\begin{cor} 
Let $ L $ be an  $ n $-dimensional nilpotent Lie algebra and  $ L/Z(L) $ be generated by $ d $ elements and   $ \dim L^{2}\geq 4. $  Then $ t(L)\geq 3. $
\end{cor}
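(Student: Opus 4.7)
The plan is to deduce the corollary directly from the Main Theorem by the contrapositive. Assume $t(L)\le 2$; it suffices to show $\dim L^{2}\le 3$, contradicting the hypothesis $\dim L^{2}\ge 4$.

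First I would split into the three cases $t(L)=0,1,2$ using the Main Theorem. In case $t(L)=0$, part (i) says $L\cong A(n)$ or $L\cong H(m)\oplus A(n-2m-1)$, and in both situations $\dim L^{2}\le 1$. In case $t(L)=1$, part (ii) forces $L\cong L_{4,3}\oplus A(n-4)$, so $\dim L^{2}=\dim L_{4,3}^{2}=2$ (the derived subalgebra is spanned by $x_{3},x_{4}$). In case $t(L)=2$, part (iii) gives three possible isomorphism types $L_{5,5}\oplus A(n-5)$, $L_{5,6}\oplus A(n-5)$, $L_{5,7}\oplus A(n-5)$, each with derived dimension at most $3$ (one may read this straight off Table \ref{ta1}: $\dim L_{5,5}^{2}=2$ and $\dim L_{5,6}^{2}=\dim L_{5,7}^{2}=3$).

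Combining the three cases, $t(L)\le 2$ implies $\dim L^{2}\le 3$. Contrapositively, $\dim L^{2}\ge 4$ implies $t(L)\ge 3$, which is the desired conclusion.

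There is no real obstacle here; the result is simply a bookkeeping consequence of the classification carried out in Theorems \ref{th1}, \ref{th2} and \ref{th3}. (Alternatively one could invoke Proposition \ref{prop3} directly, but the contrapositive reading of the Main Theorem is the more conceptual route and makes clear why the corollary is sharp: the algebras $L_{5,6}$ and $L_{5,7}$ already realise $\dim L^{2}=3$ with $t(L)=2$.)
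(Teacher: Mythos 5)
Your argument is correct as stated: granting the Main Theorem, the contrapositive bookkeeping over the three cases $t(L)=0,1,2$ does give $\dim L^{2}\le 3$, and the derived-subalgebra dimensions you read off ($\le 1$ for $A(n)$ and $H(m)\oplus A(n-2m-1)$, $2$ for $L_{4,3}$ and $L_{5,5}$, $3$ for $L_{5,6}$ and $L_{5,7}$) all check out against Table \ref{ta1}. However, this is not the route the paper intends, and the difference matters for the logical architecture. The corollary is verbatim the statement of Proposition \ref{prop3}, which is proved directly (by the inductive inequality argument built on \eqref{eq2.5}) \emph{before} and \emph{independently of} the classification; the paper's ``proof'' of the corollary is simply a citation of that proposition. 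Your route instead passes through Theorem \ref{th3}, whose own proof opens by invoking Proposition \ref{prop3} to exclude $\dim L^{2}\ge 4$. So while your deduction is valid as a consequence of an already-established theorem, if one unwinds the dependencies it is circular as a self-contained proof: the classification you appeal to was obtained using the very bound you are trying to establish. The direct citation of Proposition \ref{prop3}, which you mention only as an aside, is therefore not merely ``an alternative'' but the only non-circular option within the paper's development. Your observation about sharpness ($L_{5,6}$ and $L_{5,7}$ realising $\dim L^{2}=3$ with $t(L)=2$) is a genuine bonus that the paper does not make explicit.
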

In the rest we show that there exists at least a finite dimensional nilpotent Lie algebra $ L $ such that $ \dim L/Z(L)=d \dim L^{2}-t(L)$ for an arbitrary integer $ t(L)\geq 0. $ 
\begin{thm}
There exists at least a finite dimensional  nilpotent Lie algebra   $ L $ for an arbitrary integer $ t(L)\geq 0 $  such that $ L/Z(L) $ be generated by $ d $ elements.  
\end{thm}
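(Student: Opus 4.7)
The plan is to exhibit, for each integer $t\ge 0$, an explicit finite-dimensional nilpotent Lie algebra realizing that value, rather than invoking a counting or existence argument. The natural candidate is the family of filiform Lie algebras of maximal nilpotency class: for $n\ge 3$, let $F_n$ be the $n$-dimensional Lie algebra with basis $x_1,\dots,x_n$ and only nonzero brackets $[x_1,x_i]=x_{i+1}$ for $2\le i\le n-1$. My goal is to show $t(F_n)=n-3$, after which the family $\{F_{t+3}\}_{t\ge 0}$ realizes every nonnegative integer value.

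First I would check that $F_n$ is well-defined and nilpotent: the Jacobi identity is automatic since every nonzero bracket involves $x_1$, and the lower central series descends through the flag $\langle x_2,\dots,x_n\rangle\supseteq\langle x_3,\dots,x_n\rangle\supseteq\cdots\supseteq\langle x_n\rangle\supseteq 0$. Reading off directly from the relations, $F_n^{2}=\langle x_3,\dots,x_n\rangle$, so $\dim F_n^{2}=n-2$. For the center, if $y=\sum_i a_ix_i$ commutes with everything, then $[y,x_2]=a_1x_3=0$ forces $a_1=0$, and $[y,x_1]=-\sum_{i=2}^{n-1}a_ix_{i+1}=0$ then forces $a_2=\cdots=a_{n-1}=0$, leaving $Z(F_n)=\langle x_n\rangle$.

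Consequently $\dim F_n/Z(F_n)=n-1$, and the quotient is itself the filiform $F_{n-1}$ (the relation $[x_1,x_{n-1}]=x_n$ collapses), with derived subalgebra of dimension $n-3$. Applying Proposition~\ref{prop0} to $F_n/Z(F_n)$ gives $d(F_n/Z(F_n))=(n-1)-(n-3)=2$, and substituting into the defining identity yields
\[
t(F_n)=d(F_n/Z(F_n))\cdot\dim F_n^{2}-\dim F_n/Z(F_n)=2(n-2)-(n-1)=n-3.
\]
Setting $n=t+3$ produces a nilpotent Lie algebra with the prescribed $t$; the cases $t=0$ and $t=1$ recover $F_3\cong H(1)$ and $F_4\cong L_{4,3}$, which is a consistency check against Theorems~\ref{th1} and \ref{th2}. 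The only step that could go wrong is the center computation, but the chain structure of the relations makes it essentially immediate, so I do not anticipate a substantive obstacle; the whole argument is a single-family bookkeeping calculation.
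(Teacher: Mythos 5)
Your witness family is literally the same as the paper's: the $(t+3)$-dimensional filiform algebra of maximal class ($F_{t+3}$ in your notation, $\langle s,s_1,\dots,s_{t+2}\mid [s,s_i]=s_{i+1}\rangle$ in the paper's), so the underlying idea is identical. The only real difference is the verification: the paper proceeds by induction on $t$, building $H$ with $H/Z(H)\cong L$ at each step, whereas you compute $F_n^{2}$, $Z(F_n)$, and $d(F_n/Z(F_n))$ directly and read off $t(F_n)=2(n-2)-(n-1)=n-3$ in one pass. Your direct computation is correct (the center argument via $[y,x_2]$ and $[y,x_1]$ is exactly what is needed, and Proposition~\ref{prop0} gives $d=2$), and it is arguably cleaner and more self-contained than the paper's induction, which implicitly relies on the very fact you prove explicitly, namely that the central quotient of $F_{n}$ is $F_{n-1}$. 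The consistency checks $F_3\cong H(1)$ and $F_4\cong L_{4,3}$ against Theorems~\ref{th1} and~\ref{th2} are a nice touch; no gap.
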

\begin{proof}
If $ t(L)=0, $ then there is such a Lie algebra by using Theorem \ref{th1}.  We claim that $ L\cong \langle s, s_j\mid [s, s_i]=s_{i+1}, 1\leq j\leq t+2, 1\leq i\leq t+1 \rangle $ satisfies in our assumption for all $ t(L)\geq 1. $ We proceed by induction on $ t(L). $ If $ t(L)=1, $ then
\begin{equation*}
 L=L_{4,3}\cong  \langle s, s_j\mid [s, s_i]=s_{i+1}, 1\leq j\leq 3, 1\leq i\leq 2 \rangle
\end{equation*}
 by using Theorem \ref{th2}. By using the induction hypothesis, let 
 \begin{equation*}
 L\cong \langle s, s_j\mid [s, s_i]=s_{i+1}, 1\leq j\leq t+1, 1\leq i\leq t \rangle
 \end{equation*}
 satisfies  in our hypothesis.
 Put 
  \begin{equation*}
  H/Z(H)= \langle s+Z(H), s_j+Z(H)\mid [s, s_i]+Z(H)=s_{i+1}+Z(H), 1\leq j\leq t+1, 1\leq i\leq t \rangle.
  \end{equation*} 
 Hence 
$ \varphi : H/Z(H) \longrightarrow L=\langle s, s_j\mid [s,s_i]=s_{i+1}, 1\leq j\leq t+1, 1\leq i\leq t \rangle$ by given
\begin{align*}
& s+Z(H)\mapsto s,\cr
& s_j+Z(H)\mapsto s_j
\end{align*} 
 is an isomorphism.
Hence
 \begin{align*}
 H\cong \langle s, s_j\mid [s,s_i]=s_{i+1}, 1\leq j\leq t+2, 1\leq i\leq t+1  \rangle
 \end{align*}
and $ Z(H)=\langle s_{t+2} \rangle.  $ Clearly   $ \dim H/Z(H)=d \dim H^{2}-t(L),$ as required. 
\end{proof}
\section*{Conflict of interest }
All author declare that they have no conflicts of interest.

\end{document}